\documentclass[12pt,reqno]{amsart}
\usepackage[utf8]{inputenc}
\usepackage{graphicx}
\usepackage{amsfonts}
\usepackage{amssymb}
\usepackage[centertags]{amsmath}
\usepackage{anysize} 
\usepackage{float}
\usepackage{subfigure}
\usepackage{multirow}
\usepackage{enumerate}
\usepackage{cite}
\usepackage{hyperref}
\usepackage{orcidlink}

\hypersetup{colorlinks=true,linkcolor=blue,citecolor=red}

\def\sin{{\,\rm sen \, }} 
\DeclareGraphicsExtensions{.pdf,.eps,.png,.jpg}

\newtheorem{proposition}{Proposition}

\theoremstyle{definition}

\begin{document}

\title[On distinguishing genuine from spurious chao]{On distinguishing genuine from spurious chaos in planar singular and nonsmooth systems: A diagnostic approach}

\author[Martha  Alvarez]{Martha Alvarez-Ram\'{\i}rez \orcidlink{0000-0001-9187-1757}}

\address{Departamento de Matem\'aticas, UAM--Iztapalapa, 09310 Iztapalapa, Mexico City, Mexico}
\email{mar@xanum.uam.mx}

\begin{abstract}
We present a rigorous reassessment of chaotic behavior in two-dimensional
autonomous systems with singular or nonsmooth dynamics. For the Cummings-Dixon-Kaus
(CDK) model, we show that blow-up regularization restores smoothness and
renders the hypotheses of the Poincar\'e-Bendixson theorem applicable, thereby
excluding chaotic attractors away from the singular set. We prove topological
equivalence between the original and regularized flows on annular domains,
ensuring that no spurious invariant sets are introduced by desingularization.
In contrast, for a nonsmooth system with a $|x|$ term, we recompute the entire
period-doubling cascade, obtain a seven-term sequence of bifurcation values
converging to Feigenbaum's constant, and confirm robust chaos through positive
Lyapunov exponents, broadband spectra, and fractal dimension estimates. As a
main outcome, we propose a diagnostic protocol integrating regularization,
numerical refinement, and invariant-set criteria. This protocol provides a
reproducible standard for distinguishing genuine planar chaos from artifacts
caused by singularities or discretization, and offers a benchmark for future
studies of low-dimensional nonsmooth systems.
\end{abstract}

\subjclass[2020]{34C28, 34A36, 37D45, 37G35}
\keywords{Hopf bifurcation, business cycles, dynamical systems, Keen model, center manifold}

\maketitle

\section{Introduction}\label{sec1}
The Poincaré-Bendixson theorem sets a classical boundary for dynamical systems: smooth, autonomous
flows in the plane cannot host chaotic attractors. Under this framework, genuine chaos is expected only
in higher-dimensional systems, in two-dimensional nonautonomous models, or in flows with explicit
nonsmoothness. This result has shaped our understanding of low-dimensional dynamics for decades.

Yet, recent contributions have challenged this perspective by reporting chaos-like behavior in planar
systems. These claims often arise in contexts where the assumptions of the Poincaré-Bendixson theorem do
not apply, including singularities, loss of uniqueness, or deliberate nonsmooth terms. For example, irregular
dynamics in singular models have been interpreted as chaos \cite{Alvarez2005}, while subsequent
regularization revealed that such behavior disappears \cite{Seiler2021}. In contrast, nonsmooth constructions
can bypass the theorem entirely and support robust chaotic attractors \cite{de2022chaotic}. More recently,
numerical simulations of physical systems with singular solutions have been presented as evidence of chaos
\cite{Buts2024}. Although distinct in nature, these cases are frequently conflated. This conflation blurs the
boundary between genuine chaos and spurious complexity.

While earlier works examined these cases in isolation, our contribution is to unify them within a
common diagnostic framework, providing a coherent and systematic comparison that offers
conceptual and methodological clarity. This paper contrasts singular, regularized, and nonsmooth planar systems under a unified perspective and formulates diagnostic criteria that distinguish numerical artifacts from mathematically valid chaos.
This approach clarifies long-standing debates on the scope of the
Poincaré-Bendixson theorem and elucidates the role of singularities, loss of
Lipschitz continuity, and nonsmooth dynamics in generating apparent
low-dimensional chaos. Beyond a critical synthesis of prior studies, the
paper contributes new numerical results: we recompute the bifurcation diagram
of a representative nonsmooth system over the full parameter range where the
period-doubling cascade occurs, and report a detailed sequence of bifurcation
values $\{a_n\}$ confirming Feigenbaum's universality. These computations
corroborate the proposed diagnostic criteria and provide a reproducible
benchmark for future investigations of chaos in planar nonsmooth systems.

From the comparative analysis, a set of practical criteria emerges for
discriminating genuine chaos from numerical or modeling artifacts. Chaotic
signatures must be robust under numerical refinement (smaller time steps and
higher precision) and under small parameter perturbations; when singularities
are present, blow-up or regularization should be applied to verify whether the
attractor persists in the desingularized system; and the resulting dynamics
should display invariant-set evidence consistent with chaos, such as positive
Lyapunov exponents, dense periodic orbits, positive entropy, or broadband
spectra. Together, these principles define the diagnostic standard adopted in
Sections~\ref{sec2} and~\ref{sec3} and provide a reproducible reference for
future investigations.

Following these principles, we present four main results. First, we
formalize a diagnostic checklist that operationalizes when planar ``chaos''
should be regarded as genuine rather than a singularity- or
discretization-induced artifact. Second, for the CDK model we clarify why
regularization restores the hypotheses of the Poincaré-Bendixson theorem and
establish topological equivalence (away from the singular set) to rule out
spurious dynamics introduced by desingularization. Third, for a representative
nonsmooth system we perform new computations: we obtain the complete
bifurcation diagram across the entire period-doubling window and report a
seven-term sequence $\{a_n\}$ with Feigenbaum scaling, thereby providing a
reference dataset for reproducibility. Finally, we discuss the modeling
implications of singular systems and propose a practical protocol that
harmonizes mathematical rigor with engineering evidence. This diagnostic
protocol may be directly applicable to engineering systems where singularities
or dry-friction terms generate apparent chaos, such as vibro-impact oscillators,
electronic switching circuits, and models of astrophysical bursts.

The remainder of the paper is organized as follows.
Section~\ref{sec2} revisits the Cummings-Dixon-Kaus model, highlighting the
apparent chaotic dynamics and showing how they disappear under rigorous
regularization. Section~\ref{sec3} analyzes the nonsmooth construction of
Zain-Aldeen et al., emphasizing its bifurcation mechanisms and numerical
evidence for robust chaos. Section~\ref{sec3.1} addresses the interpretation
of chaos arising from singular solutions in physical systems, as proposed by
Buts and collaborators. Finally, Section~\ref{sec4} summarizes the main
conclusions and discusses the broader implications of distinguishing spurious
complexity from genuine chaotic dynamics in planar flows.

\section{Apparent chaos and regularization in the Cummings-Dixon-Kaus model: From heuristics to rigorous analysis}\label{sec2}
Beyond reviewing the Cummings-Dixon-Kaus (CDK) model and its historical
interpretations, this section highlights two original contributions. First,
we clarify how the regularization restores the hypotheses of the
Poincaré-Bendixson theorem and rigorously excludes chaos away from the
singularity. Second, we formulate explicit conditions for the topological
equivalence between the original and regularized flows on the nonsingular
domain, ensuring that no spurious dynamics are introduced by the
desingularization. Together, these results provide a reproducible reference
for future studies of singular planar systems and set the stage for a
re-examination of the original CDK formulation in its astrophysical context.

A dynamical model for the evolution of the magnetic field in isolated neutron
stars was proposed by Cummings, Dixon, and Kaus \cite{Cummings1992}, hereafter
referred to as the CDK model. Their model captures three distinct regimes
bursters, pulsars, and magnetically aligned ``dead'' stars, emerging from the
nonlinear interaction between differential rotation and magnetic damping. In
particular, the erratic behavior observed in gamma-ray bursters is explained as
a result of repeated transitions through a singular configuration where the
magnetization vanishes. The model predicts that neutron stars may follow
different evolutionary paths, typically transitioning from an initial chaotic
burster phase to a pulsar state, and eventually to a non-radiating, magnetically
aligned state. This framework provides a unified dynamical interpretation of the
observed diversity among neutron stars and offers a plausible mechanism for the
generation of short, intense gamma-ray bursts via magnetospheric processes.

The core dynamics of the system are captured by a set of nonlinear ordinary differential equations governing the evolution of the magnetization vector $\vec{m} = (m_x, m_y, m_z)$ in dimensionless form:
\begin{align}\label{eq_chaos1}
\dot{m}_x &= m_y - \frac{\lambda m_x m_z}{m_x^2 + m_y^2 + m_z^2} - \epsilon m_x, \nonumber \\
\dot{m}_y &= -m_x - \frac{\lambda m_y m_z}{m_x^2 + m_y^2 + m_z^2} - \epsilon m_y, \\
\dot{m}_z &= \frac{\lambda m_z^2}{m_x^2 + m_y^2 + m_z^2} - \overline{\epsilon} m_z - (\lambda - \overline{\epsilon})\nonumber ,
\end{align}
where  $m_x, m_y, m_z$ are the Cartesian components of magnetization (dimensionless), $\lambda$ is the scaled Landau damping parameter (unitless), $\epsilon$ and $\overline{\epsilon}$ are the scaled mechanical damping coefficients.
Cummings et al. observed that, in a specific region of parameter space  where $\epsilon/\lambda < 1$ and $\overline{\epsilon}/\lambda < 1$, the magnetization, and hence the magnetic field, exhibits erratic, nonperiodic behavior, occasionally generating intense pulses of directional electric and magnetic fields. This regime corresponds to repeated attraction of the system toward a singularity at the origin, resulting in unpredictable bursts of magnetic activity. Such behavior is associated with the short-duration gamma-ray emissions characteristic of bursters.

The unpredictability of the orbits arises from their extreme sensitivity to initial conditions near the origin, a characteristic that led Cummings et al.~to suspect chaotic behavior. They also noted that system~\eqref{eq_chaos1} exhibits axial symmetry about the $m_z$-axis, which motivated the adoption of cylindrical coordinates through the transformation $(m_x, m_y, m_z) \mapsto (x\cos\phi, x\sin\phi, z)$. Under this change of variables, the system takes the form:
\begin{equation}
\begin{aligned} \label{eq_chaos2d}
\dot{x} &= \frac{x z}{x^2 + z^2} - a x, \\
\dot{z} &= \frac{z^2}{x^2 + z^2} - b z + b - 1,\\
\dot{\phi} &=  1.
\end{aligned}
\end{equation}
where $a= \epsilon/\lambda$ y $b= \overline{\epsilon}/\lambda$. 
The dynamics of the reduced $(x, z)$ subsystem can thus be analyzed independently, as it decouples from the angular variable $\phi$ in the full three-dimensional model~\eqref{eq_chaos1}. 

In \cite{Alvarez2005},  Alvarez-Ram\'{\i}rez  et al. report that numerical simulations of both the full three-dimensional model and its planar reduction show erratic behavior and pronounced sensitivity to initial conditions. They demonstrate that trajectories can undergo abrupt deflections when passing near the singularity at the origin, even under arbitrarily small perturbations. The resulting time series and phase portraits, exemplified in Figure \ref{fig:pupu}, capture the system’s unpredictable dynamics and strong sensitivity to local geometric structures.
To investigate the nature of this irregular behavior, the authors employ a regularization method based on polar blow-up techniques. This analysis shows that the system contains an infinite family of homothetic orbits., all emanating from and returning to the so-called collision manifold $\Lambda = { r = 0 }$. This geometric structure leads to recurrent transitions between ejection and collision trajectories. The authors conclude the resulting irregular dynamics as a form of piecewise deterministic behavior, where small perturbations near the singularity give rise to abrupt shifts between distinct homothetic orbits.
\begin{figure}[hbpt] 
\centering
\subfigure[$(m_x,m_y)$-plane]
{ \includegraphics[scale=0.08]{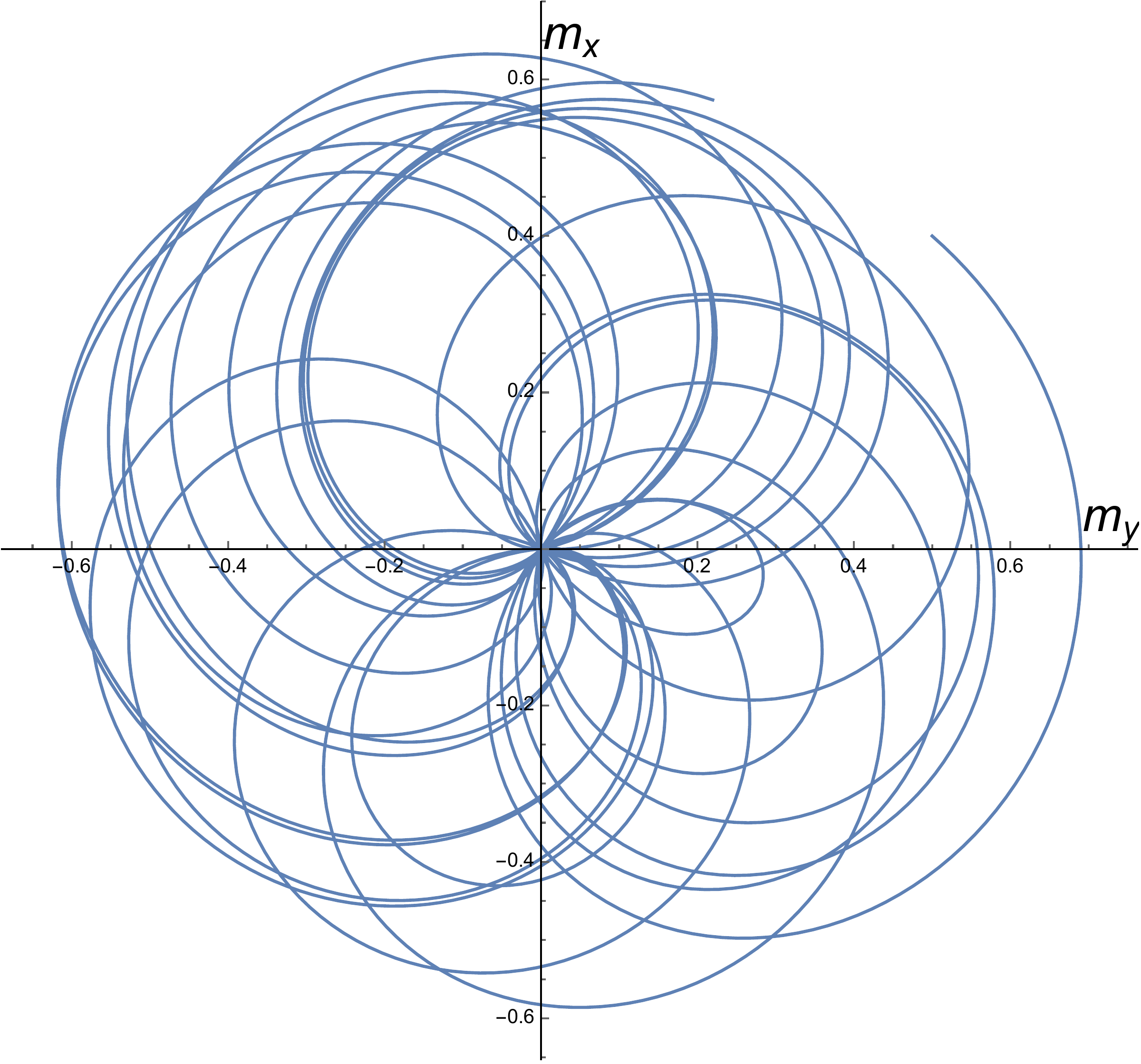}
}\quad
\subfigure[$(m_x,m_z)$-plane]
{ \includegraphics[scale=0.08]{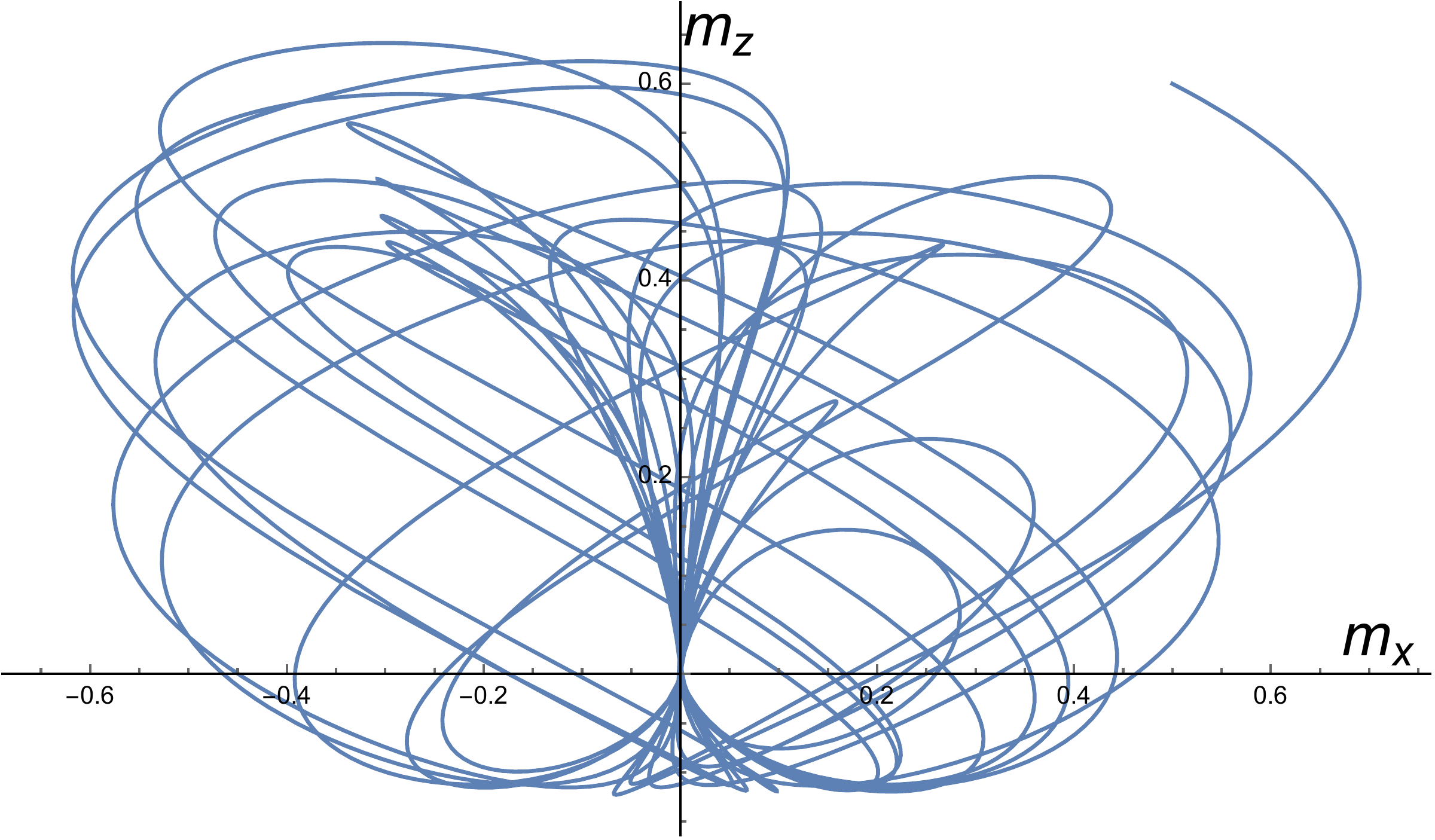}
}\quad
\subfigure[$(m_z,m_y)$-plane]
{ \includegraphics[scale=0.08]{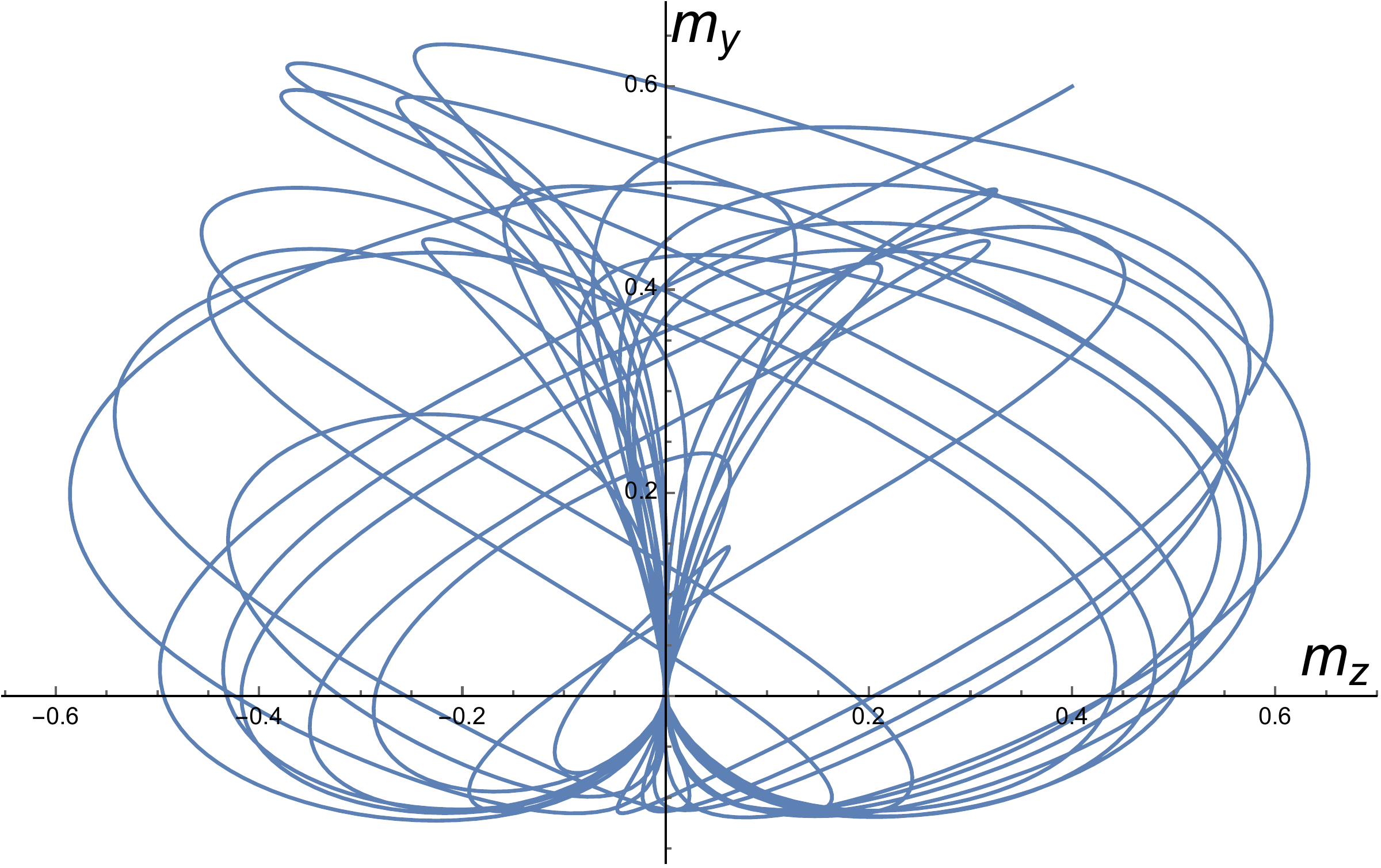}}\qquad
\subfigure[Time series of the magnetization components: $m_x$ (blue), $m_y$ (yellow), and $m_z$ (green)]
{ \includegraphics[scale=0.39]{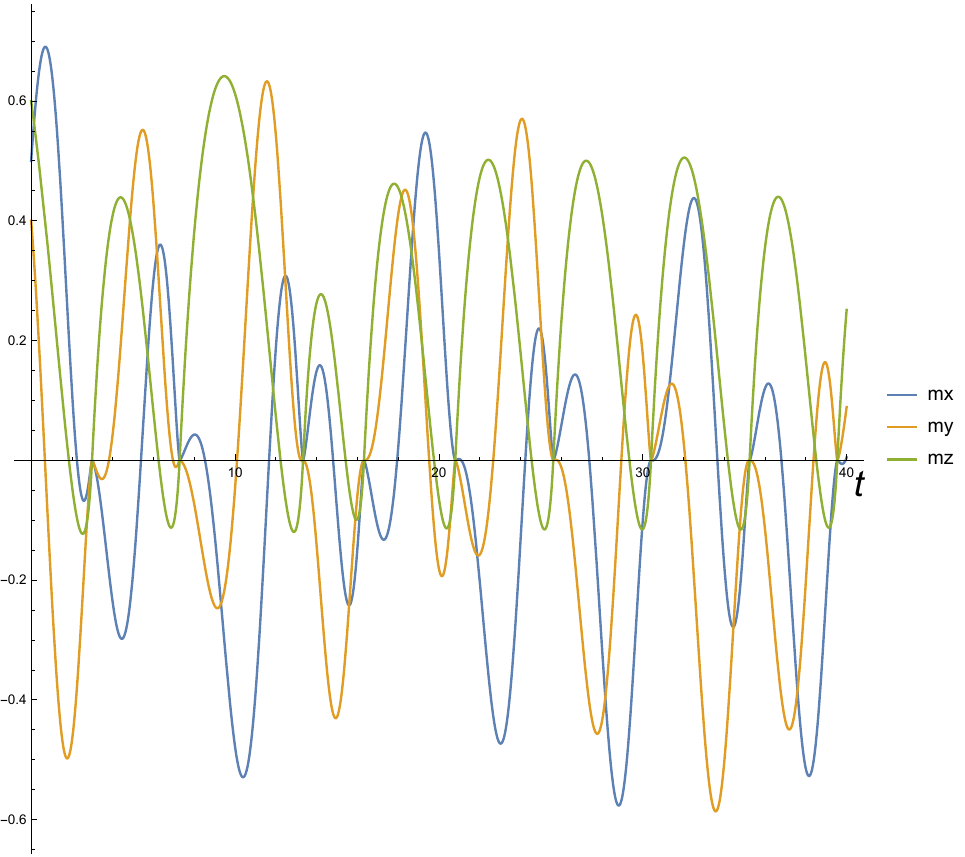}}\hfill
\caption{Phase portraits of the three-dimensional CDK system for representative parameter ratios. The projections onto different coordinate planes illustrate the irregular dynamics and sensitivity to initial conditions near the singularity, while the time series highlights the erratic temporal behavior.}
    \label{fig:pupu}
\end{figure}

In addition, Alvarez-Ramírez et al. conducted numerical simulations of the reduced system~\eqref{eq_chaos2d}, obtaining phase portraits of the type shown in Figure~\ref{fig:pupu2}. Based on these numerical experiments, the authors invoke the Poincaré-Bendixson theorem to suggest the existence of limit cycles in the reduced system.  However, their application of the theorem lacks full mathematical justification. Although a polar blow-up is employed to regularize the singularity at the origin, the resulting vector field remains non-smooth and is not defined at that point. As a result, the system does not satisfy the continuity and differentiability conditions required for a rigorous application of the theorem. Their argument therefore offers a compelling heuristic interpretation of the observed recurrent dynamics, but ultimately falls short of providing a definitive mathematical conclusion.
\begin{figure}[hbpt]
\centering
\includegraphics[scale=0.09]{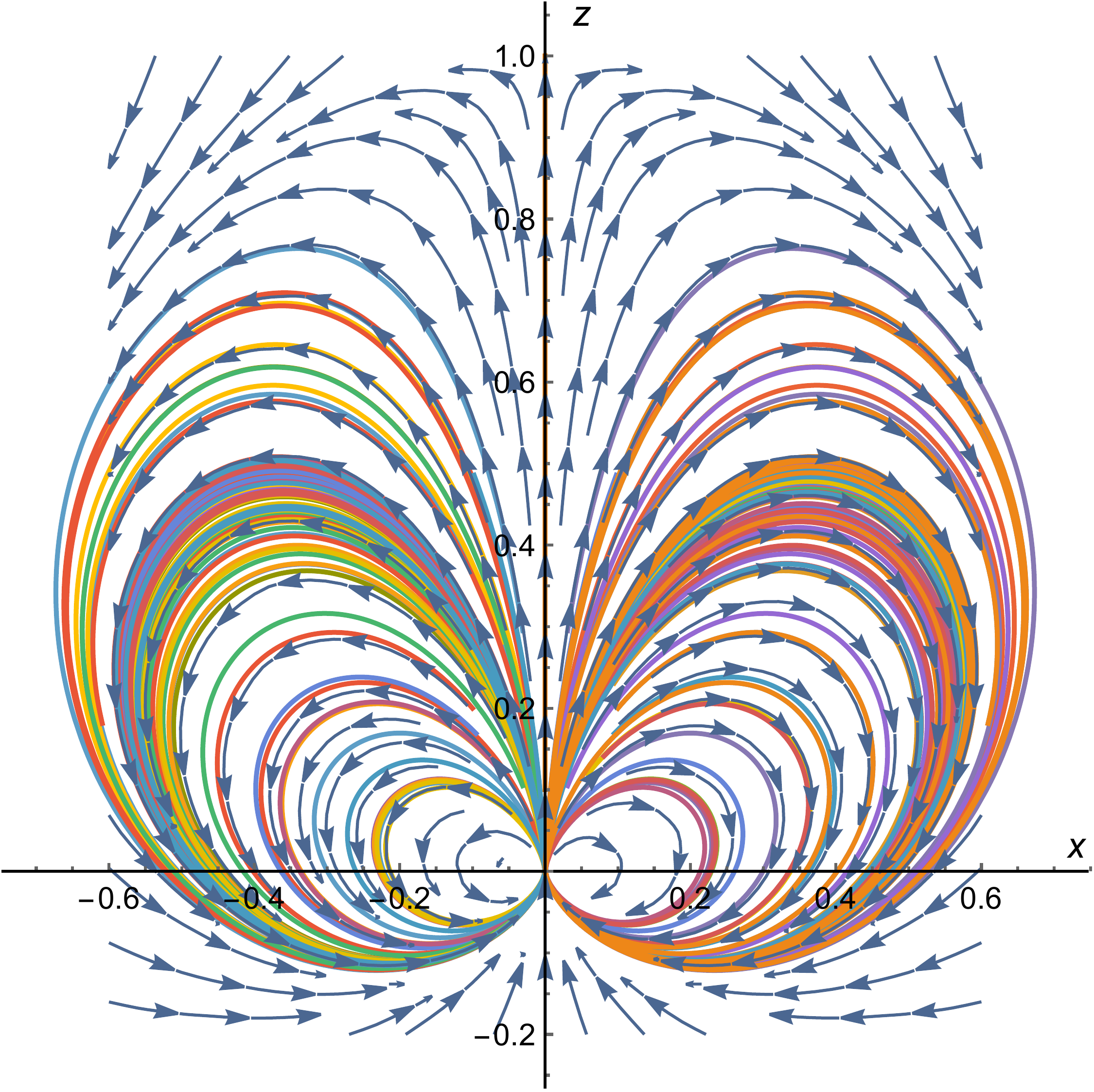}
\caption{Phase portrait and some trajectories of the two-dimensional system~\eqref{eq_chaos2d} in the $(x,z)$-plane for parameter values $a = 0.6$ and $b = 0.7$. }
\label{fig:pupu2}
\end{figure}

This issue was recently addressed and conclusively resolved by 
Seiler and Seiß~\cite{Seiler2021}, who constructed a globally smooth 
polynomial vector field analytically equivalent to the original system 
away from the singularity. Their formulation restores the hypotheses of 
the Poincaré-Bendixson theorem and thus allows a rigorous exclusion of 
chaotic attractors. 
More precisely, they perform a geometric reformulation of 
system~\eqref{eq_chaos1} within Vessiot’s theory of differential equations 
(see, e.g.,~\cite{Vessiot,Seiler2021b}). The key step is to multiply both 
equations by the common denominator to obtain a quasi-linear implicit 
system and to view it as a submanifold of the first jet bundle $J^{1}(\mathbb{R},\mathbb{R}^{2})$ 
endowed with the contact distribution. By computing the corresponding 
Vessiot distribution and projecting onto the $(x,y)$-space, they derive a 
globally smooth polynomial vector field that coincides with the original 
flow outside the singularity up to a strictly monotone reparametrization 
of time. 
Their approach bypasses the singularity at the origin, restores 
smoothness, and enables a complete bifurcation-theoretic classification 
of all limit sets, confirming that no strange attractors or chaotic 
behavior can occur in this model.

Before proceeding with the regularized formulation, we clarify the notation:
we adopt $(x,y)$ as in Seiler and Sei\ss\:\cite{Seiler2021b}, with $y \equiv z$, 
to facilitate direct comparison with their results.

The starting point is the planar system~\eqref{eq_chaos2d}, which, as previously mentioned, is undefined at the origin and therefore not globally differentiable. To overcome this limitation, the authors multiply both equations by the common denominator  $x^2 + y^2$, thereby transforming the system into the following quasi-linear implicit system:
\begin{equation}\label{eq:CDK_poly_implicit}
\begin{aligned}
(x^2 + y^2)\dot{x} &= xy - a(x^3 + x y^2), \\
(x^2 + y^2)\dot{y} &= y^2 - (b y - b + 1)(x^2 + y^2).
\end{aligned}
\end{equation}
This reformulation is polynomial in all variables, including at the origin, although it still defines a singular algebraic variety.

To proceed, Seiler and Sei\ss  \; employ the geometric theory of differential equations using the first jet bundle $J^1(\mathbb{R}, \mathbb{R}^2)$ and the associated contact distribution. By computing the Vessiot distribution associated with the quasi-linear formulation and projecting it onto the space of independent and dependent variables, they obtain a smooth polynomial vector field on~$\mathbb{R}^3$ that preserves the qualitative dynamics of the original system~\eqref{eq_chaos1}, modulo a reparametrization of time, outside the singular point.

The resulting explicit polynomial vector field is
\begin{equation}\label{eq:CDK_field}
\mathbf{Y} = (x^2 + y^2)\partial_t + \left(xy - a(x^3 + x y^2)\right)\partial_x + \left(y^2 - (b y - b + 1)(x^2 + y^2)\right)\partial_y.
\end{equation}
Since the \( t \)-component merely reparametrizes time, the relevant planar dynamics are described by the reduced autonomous system:
\begin{equation}\label{eq:CDK_poly_reduced}
\begin{aligned}
\frac{dx}{d\tau} &= xy - a(x^3 + x y^2), \\
\frac{dy}{d\tau} &= y^2 - (b y - b + 1)(x^2 + y^2).
\end{aligned}
\end{equation}
This system is globally defined on $\mathbb{R}^2$, and its trajectories coincide with those of the original system~\eqref{eq_chaos1} on
$\mathbb{R}^2 \setminus \{(0,0)\}$. As a consequence, the Poincaré-Bendixson theorem becomes applicable without obstruction, allowing for a rigorous exclusion of chaotic dynamics in the model proposed by Cummings, Dixon, and Kaus.

After the blow-up transformation, the resulting vector field becomes a cubic polynomial
system defined on the entire plane. This construction recovers the smoothness assumptions
required by the Poincaré-Bendixson theorem and provides a sound starting point for a
systematic classification of its limit sets.

\begin{proposition}
Let $X$ denote the vector field of the original (singular) planar CDK system~\eqref{eq_chaos2d}
and $\widetilde{X}$ the vector field of the regularized polynomial system~\eqref{eq:CDK_poly_reduced}.
For any annulus
\[
A=\{(x,y)\in\mathbb{R}^{2}:\ r_{0}\le\sqrt{x^{2}+y^{2}}\le r_{1}\},\qquad r_{0}>0,
\]
the flows of $X$ and $\widetilde{X}$ are topologically equivalent: there exists a homeomorphism
$h:A\to A$ mapping the orbits of $X$ onto those of $\widetilde{X}$ and preserving
their $\alpha$- and $\omega$-limit sets contained in $A$.
\end{proposition}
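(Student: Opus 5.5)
The plan is to observe that on $A$ the two vector fields differ only by a positive smooth scalar factor, and to take $h$ to be the identity. Write $\rho(x,y)=x^{2}+y^{2}$. A direct comparison of \eqref{eq_chaos2d} (with $z\equiv y$) and \eqref{eq:CDK_poly_reduced} gives
\[
\widetilde{X}(x,y)=\rho(x,y)\,X(x,y),\qquad (x,y)\neq(0,0).
\]
For the $x$-component, $\rho\bigl(\tfrac{xy}{\rho}-ax\bigr)=xy-a(x^{3}+xy^{2})$. For the $y$-component, $\rho\bigl(\tfrac{y^{2}}{\rho}-by+b-1\bigr)=y^{2}-(by-b+1)\rho$. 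On $A$ we have $r_{0}^{2}\le\rho\le r_{1}^{2}$ with $r_{0}>0$, so $\rho$ is smooth, bounded, and bounded away from zero there. In particular $X$ is smooth (indeed real analytic) on a neighbourhood of $A$. Both fields are therefore locally Lipschitz on $A$ and generate unique local flows.

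The key step is the standard time-reparametrization lemma. Fix $p\in A$ and let $\gamma(t)$ solve $\dot\gamma=X(\gamma)$, $\gamma(0)=p$, for as long as it stays in $A$. Define
\[
\tau(t)=\int_{0}^{t}\frac{ds}{\rho(\gamma(s))}.
\]
Since $1/\rho\in[r_{1}^{-2},r_{0}^{-2}]$, the map $\tau$ is a strictly increasing bi-Lipschitz $C^{1}$ bijection between corresponding time intervals. The chain rule then shows that $\widetilde\gamma(\tau)=\gamma(t(\tau))$ satisfies $d\widetilde\gamma/d\tau=\rho\,X=\widetilde X$. By uniqueness, $\widetilde\gamma$ is the $\widetilde X$-orbit through $p$.

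It follows that the orbits of $X$ and $\widetilde X$ in $A$ coincide as oriented curves, so $h=\mathrm{id}_{A}$ is a homeomorphism carrying orbits to orbits and preserving time orientation. Equilibria are also preserved, because $X(p)=0$ if and only if $\widetilde X(p)=0$ on $A$.

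For the limit sets, note that $\tau$ is bi-Lipschitz. Hence $t\to+\infty$ with $\gamma(t)\in A$ if and only if $\tau\to+\infty$, and finite-time exit from $A$ corresponds to finite-$\tau$ exit. Consequently, for any orbit whose forward (respectively backward) semi-orbit remains in $A$, the sequences $\gamma(t_{n})$ and $\widetilde\gamma(\tau_{n})$ with $t_{n}\to\infty$ coincide up to reindexing. This gives $\omega_{X}(p)=\omega_{\widetilde X}(p)$, and $\alpha$ is handled by the same argument.

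The main obstacle is interpretive rather than computational. Orbits may leave $A$ through its boundary circles, so $\alpha$- and $\omega$-limit sets are only meaningful for semi-orbits confined to $A$. I would make this explicit: the statement asserts equality of limit sets for orbits (or semi-orbits) that remain in $A$, and equality of the restricted orbit arcs otherwise. The bi-Lipschitz bound on $\tau$, which uses $r_{0}>0$ essentially, is precisely what prevents a confined semi-orbit of one flow from reaching its limit in finite time for the other. This is exactly the property that would fail at the origin, where $\rho\to0$.
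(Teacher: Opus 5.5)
Your proof is correct and follows the paper's own argument. On $A$ you have $\widetilde X=(x^2+y^2)X$ with a factor bounded away from $0$ and $\infty$, so $h=\mathrm{id}$ together with a strictly monotone time change gives the topological equivalence and preserves equilibria and limit sets. Your version is more careful than the paper's sketch in three ways: you fix the time change explicitly as $d\tau/dt=1/(x^2+y^2)$, consistent with the $\partial_t$-component of $\mathbf{Y}$; you use the bi-Lipschitz bound to show $t\to\infty$ iff $\tau\to\infty$ for semi-orbits confined to $A$; and you correctly describe $h$ as an orbital equivalence rather than a conjugacy.
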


\begin{proof}[Sketch of proof]
On the annulus $A$ the common denominator $x^{2}+y^{2}$ is strictly positive, so
multiplying system~\eqref{eq_chaos2d} by this factor yields the quasi-linear formulation \eqref{eq:CDK_poly_reduced}.
 This is a smooth vector field on $A$ that differs from the
regularized field $\widetilde{X}$ only by a strictly positive time-scaling factor
$\rho(x,y)>0$. Hence the two systems are related by the monotone reparametrization
\[
\frac{d\tau}{dt}=\rho(x,y)>0,
\]
and the identity map $h=\mathrm{id}$ provides a $C^{1}$-conjugacy between their
phase portraits on $A$. The orbit structure, as well as all $\alpha$- and
$\omega$-limit sets lying entirely in $A$, are therefore preserved. In particular,
no new equilibria or limit cycles are created within $A$ by the regularization;
any additional equilibria introduced by the blow-up are confined to the exceptional
circle corresponding to the singular point. Since the time change is strictly
monotone, it cannot create or destroy periodic or chaotic invariant sets inside $A$.
\end{proof}


The regularized system can now be analyzed using standard phase-plane
techniques. Because it is topologically equivalent to the original system away
from the singularity, its equilibria, separatrices, and invariant regions
faithfully represent the dynamics of the CDK model, enabling a rigorous
classification of the global flow.

The origin, which corresponds to the singular point of system~\eqref{eq_chaos1},
becomes a \emph{degenerate equilibrium point} (specifically, a nilpotent
singularity) of the polynomial system~\eqref{eq:CDK_poly_reduced}. This enables
a detailed local analysis using the technique of geometric desingularization
(blow-up) for planar vector fields. For a general treatment of this method,
the reader is referred to~\cite{jarque}.

Through a detailed geometric and bifurcation-theoretic analysis, including directional
blow-ups and classification of phase portraits, they show that the system admits a rich
set of dynamical behaviors, such as elliptic sectors and an infinite number of homoclinic
orbits, but no chaotic attractors. Their decomposition of parameter space into 16 distinct
regions reveals the structural complexity of the dynamics. In particular, some regions
contain elliptic sectors with infinitely many homoclinic orbits which, although intricate,
do not meet the topological criteria for chaos: trajectories are not transitive and no
strange attractors are present. After performing a sequence of blow-ups to analyze the
dynamics near the origin, the authors conclude that the local phase portrait is as shown
in Figure \ref{fig:pupuregula}.
\begin{figure}[h!]
\centering
\includegraphics[scale=0.09]{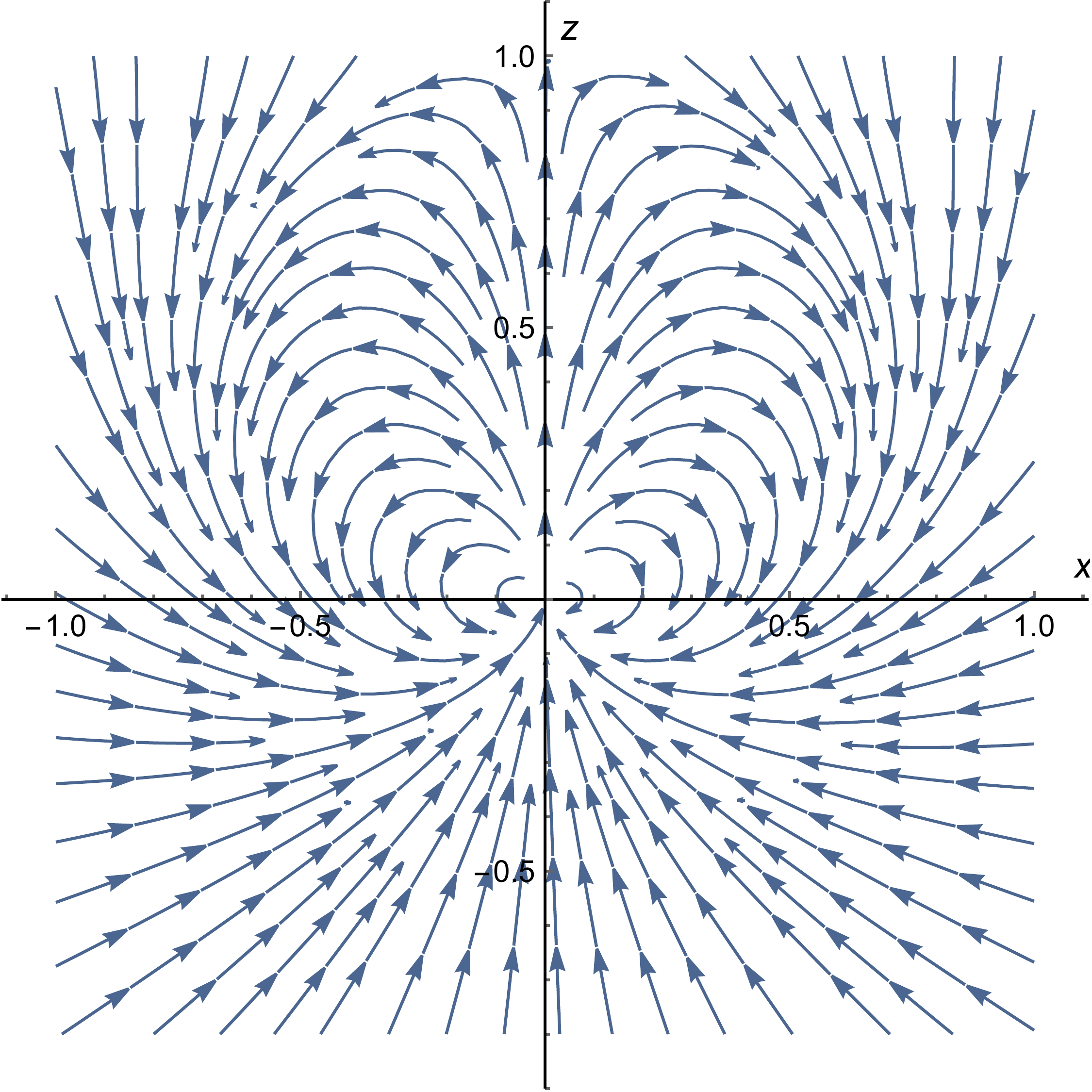}
\caption{Phase portrait of the regularized two-dimensional system~\eqref{eq:CDK_poly_reduced} in the $(x,z)$-plane for parameter values $a = 0.6$ and $b = 0.7$. The dynamics reveal elliptic sectors with infinitely many homoclinic loops, showing how regularization removes spurious chaotic behavior while preserving recurrent complexity.}
\label{fig:pupuregula}
\end{figure}
As noted by Seiler and Sei\ss, the existence of elliptic sectors in system~\eqref{eq_chaos1} was previously reported by Alvarez-Ramírez et al. \cite{Alvarez2005}. However, contrary to their interpretation, the union of these two elliptic sectors does not constitute a genuine two-dimensional attractor. Instead, it forms an attracting set that lacks the necessary topological and dynamical properties, such as transitivity and dense orbits, that characterize a true attractor in the sense of dynamical systems theory.

In summary, the analyses by Alvarez-Ramírez et al. \cite{Alvarez2005} and Seiler and Sei\ss \:\cite{Seiler2021} offer complementary perspectives on the dynamics of the CDK system. The former highlights how structural singularities can give rise to transient, chaos-like behavior, while the latter provides a rigorous regularization procedure that restores uniqueness and establish that chaotic attractors cannot emerge
 under smooth perturbations.   Together, these studies underscore the importance of distinguishing between deterministic chaos and complexity induced by singularities or non-uniqueness in low-dimensional systems.

Building on these debates, Zain-Aldeen et al. \cite{de2022chaotic} introduced a two-dimensional nonsmooth model featuring the term $|x|$. They note that the constant term $b - 1$ plays a role in the system and suggest that this affects its autonomy. From a mathematical standpoint, however, a system is considered autonomous as long as its right-hand side does not explicitly depend on time. The inclusion of constant terms such as $b - 1$ does not alter this property.

Furthermore, Zain-Aldeen et al. \cite{de2022chaotic} claim that “despite the seeming violation of the Poincaré-Bendixson theorem,” their system “preserves the chaotic behaviors.” This should rather be understood as the system operating outside the theorem’s scope, since the smoothness and continuity assumptions required for its applicability are not satisfied. The Poincaré-Bendixson theorem is not violated in such cases, it simply does not apply when the required assumptions, such as smoothness and continuity of the vector field, are not met. By incorporating a nonsmooth term, their model falls outside the theorem's domain of validity. The distinction between \emph{violating} the theorem and merely \emph{operating outside its scope} is conceptually important and will be explored further in the next section.

In a separate development, Buts and  Kuzmin \cite{Buts2024} revisit the same singular model originally analyzed by  Alvarez-Ramírez et al. \cite{Alvarez2005}, which features a non-Lipschitz vector field and exhibits a butterfly-shaped attractor in numerical simulations. However, their analysis does not reference the subsequent work of Seiler and Sei\ss \: \cite{Seiler2021}, who showed that the apparent chaotic behavior vanishes upon regularization and high-precision numerical integration. 
Their omission is significant, as it overlooks the subsequent work by  Seiler and Sei\ss \: \cite{Seiler2021}, which rigorously clarifies the underlying dynamics through regularization.

While Buts and Kuzmin interpret the irregular behavior as physically meaningful chaos, the findings of Seiler and Sei\ss suggest that it results from numerical artifacts caused by structural singularities near the origin. This divergence highlights the need for both rigorous mathematical tools and careful numerical experimentation when studying low-dimensional systems with singularities or nondifferentiable terms.

Taken together, the CDK model and its variants clearly illustrate that singularities can mimic chaotic signatures, underscoring the importance of proper regularization before attributing physical meaning.

Our analysis supports a pragmatic viewpoint: singular formulations can serve 
as valuable idealizations, but physically meaningful chaos must remain robust 
under desingularization and systematic numerical refinement. When an attractor 
disappears after a blow-up transformation or hinges critically on near-collision 
trajectories, it is more appropriate to attribute it to the singular limit 
rather than to the underlying physics. 

For applications, we advocate a two-stage validation protocol: (i) replicate 
all diagnostic indicators after smoothing or regularizing the vector field to 
verify the persistence of the attractor, and (ii) search for experimental or 
empirical fingerprints, such as broadband spectra or hysteresis phenomena, that 
remain observable under measurement noise and parameter uncertainty. 

Looking ahead, these criteria can be systematically extended to hybrid or 
switching systems, where nonsmooth events interact with continuous dynamics, 
and to the study of hidden attractors. Combining reproducible numerical 
benchmarks with carefully designed experiments will be essential to establish 
robust evidence of chaos in engineering and astrophysical applications.

\section{A two-dimensional nonsmooth model exhibiting chaos}\label{sec3}
In this section we recompute the bifurcation diagram over the entire parameter
range containing the period-doubling cascade and report a detailed sequence of
bifurcation values $\{a_n\}$ converging to Feigenbaum’s constant. These
computations constitute an original, reproducible benchmark that supports the
diagnostic criteria advanced in this study.

In an effort to challenge the classical constraints imposed by smooth planar
flows, Zain-Aldeen et al. \cite{de2022chaotic} proposed a two-dimensional autonomous model that
explicitly incorporates a nonsmooth term. Their motivation was to demonstrate
that chaotic behavior can arise in low-dimensional continuous-time systems
even in the absence of external forcing or a third dimension, thus directly
challenging the classical assumption that chaos in flows requires either
time-dependence or three-dimensional phase space.

The system introduced in their Section~3, referred to as the “developed system,” is defined by the equations:
\begin{equation}\label{eq:zain}
\begin{array}{l}
\dot{x} = \dfrac{xy}{x^2 + y^2}, \\
\dot{y} = \dfrac{y^2}{x^2 + y^2} - a y - b |x|,
\end{array}
\end{equation}
where $a > 0$ and $b > 0$ are system parameters. The system is invariant under the transformation $(x, y) \mapsto (-x, y)$, which corresponds to a
$\mathbb{Z}_2$-symmetry. Geometrically, this implies that the phase portrait exhibits reflection symmetry with respect to the $y$-axis. In addition, the term \(|x|\) introduces a point of nondifferentiability at $x = 0$, violating the smoothness conditions required for the classical version of the Poincaré-Bendixson theorem. As a consequence, the usual restrictions that prohibit chaotic attractors in two-dimensional autonomous flows no longer apply,  enabling nontrivial dynamics and strange attractors in a planar setting.

To explore the system's behavior, the authors fix the parameters to \(a = b = 10\), a choice that is consistently used throughout their numerical experiments. This configuration leads to dynamics characterized by sensitive dependence on initial conditions and the emergence of a strange attractor near the origin. Initial conditions are typically selected close to the origin, such as \((x_0, y_0) = (0.01, 0.01)\), in order to investigate the system's response near the singularity introduced by the nonsmooth term. Despite the simplicity of the equations, the resulting trajectories exhibit sustained aperiodic oscillations that suggest the presence of deterministic chaos.

The global geometry of the attractor is illustrated in Figure~\ref{fig:retzain}, which shows phase portraits of system~\eqref{eq:zain} in the $(x, y)$-plane for representative initial conditions. The plots show a symmetric, butterfly-shaped structure centered around the origin, reminiscent of classic chaotic attractors observed in higher-dimensional systems. Trajectories repeatedly alternate between the left and right lobes of the attractor, producing a characteristic “switching” behavior indicative of underlying chaotic dynamics.  
\begin{figure}[h!]
\centering
\includegraphics[scale=0.09]{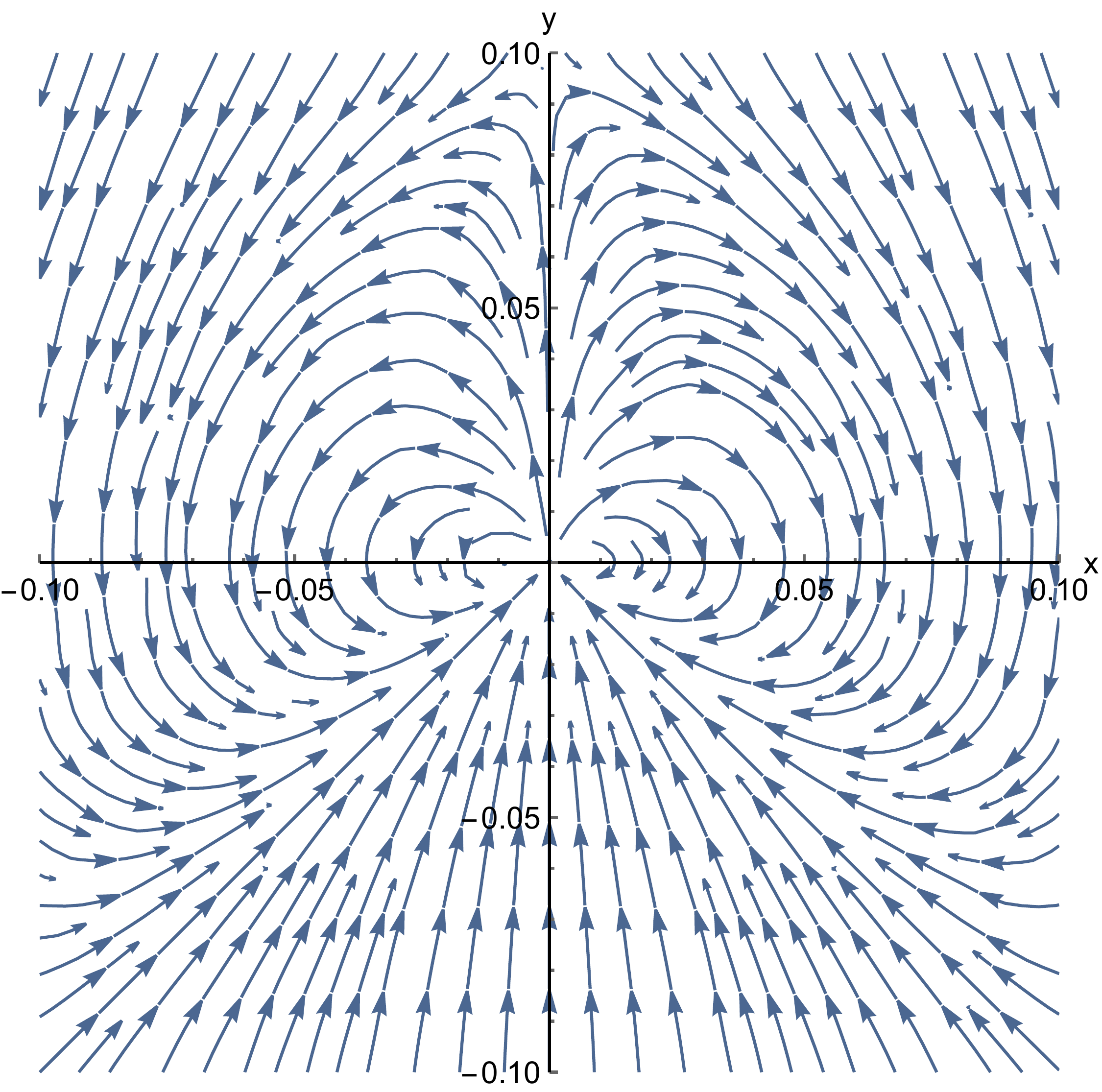}\qquad
\includegraphics[scale=0.09]{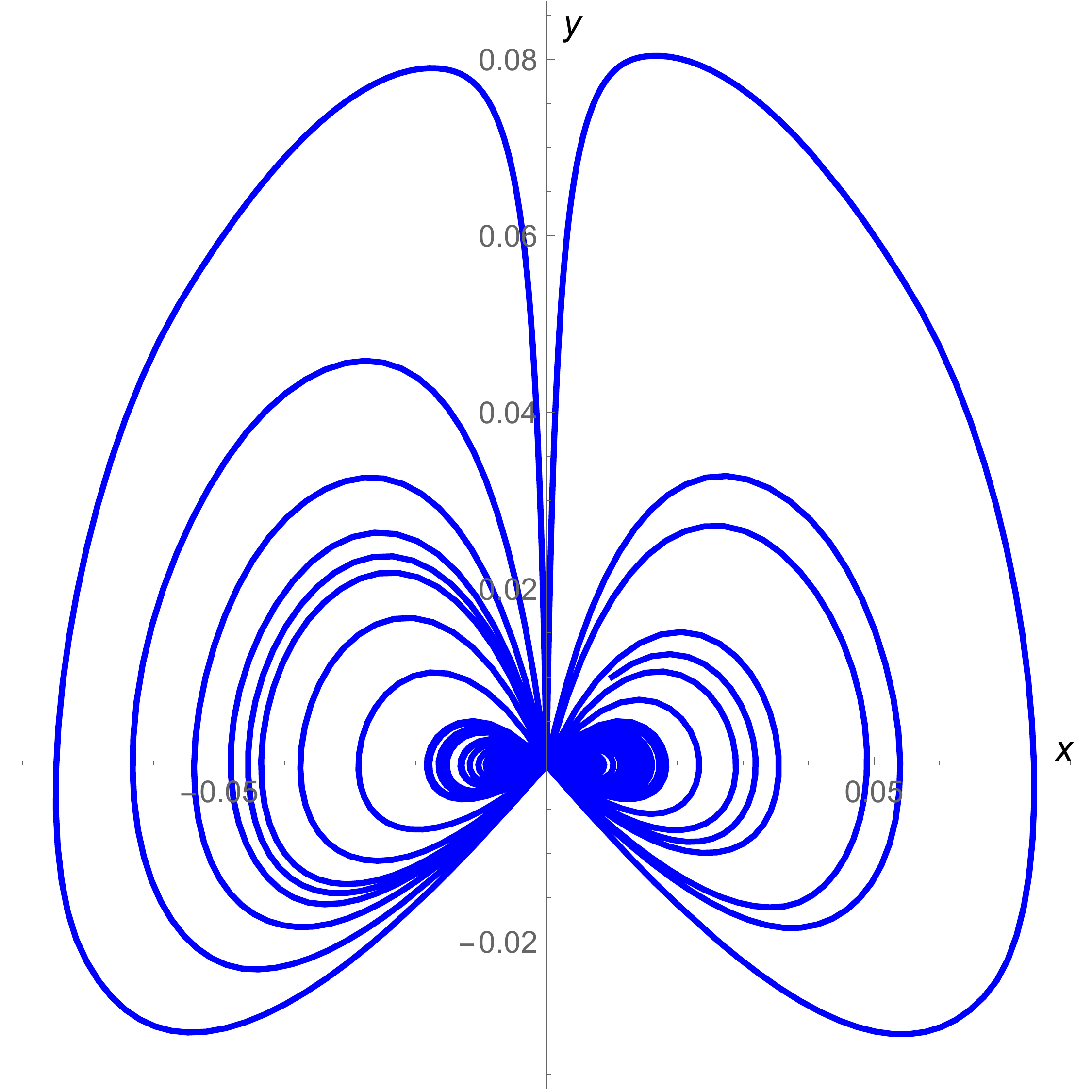}
\caption{Phase portrait and representative trajectories of the two-dimensional
system~\eqref{eq_chaos2d} projected onto the $(x,y)$-plane for $a = 10$ and $b = 10$.
The plots display a butterfly-shaped structure characterized by positive Lyapunov
exponents, fractal geometry, and aperiodic switching, thereby supporting its
interpretation as a strange attractor in the sense of dynamical systems theory.}
\label{fig:retzain}
\end{figure}

Complementary insight is obtained from the temporal evolution of the system. 
Figure~\ref{fig:timezain} shows representative time series of $x(t)$ and $y(t)$, 
which exhibit sustained aperiodic oscillations consistent with chaotic behavior. 
The coexistence of irregular amplitude fluctuations and long-term persistence 
confirms that the dynamics are not merely transient but correspond to a 
robust attractor. 

Further evidence supporting the chaotic nature of the system is provided by a suite of numerical diagnostics. As shown in Figure~\ref{fig:lyapexp}, the largest Lyapunov exponent $\lambda_1$ converges to a positive value near $+1.25$), while the second exponent $\lambda_2$ settles around 
$-1.4$, consistent with volume contraction in a dissipative system. In addition, the power spectrum displays a broadband structure, the fractal (Kaplan--Yorke) dimension is estimated at approximately $D \approx 1.89$, and the autocorrelation function decays rapidly, each of these features aligns with standard signatures of low-dimensional chaos.
\begin{figure}[h!]
\centering
\includegraphics[scale=0.35]{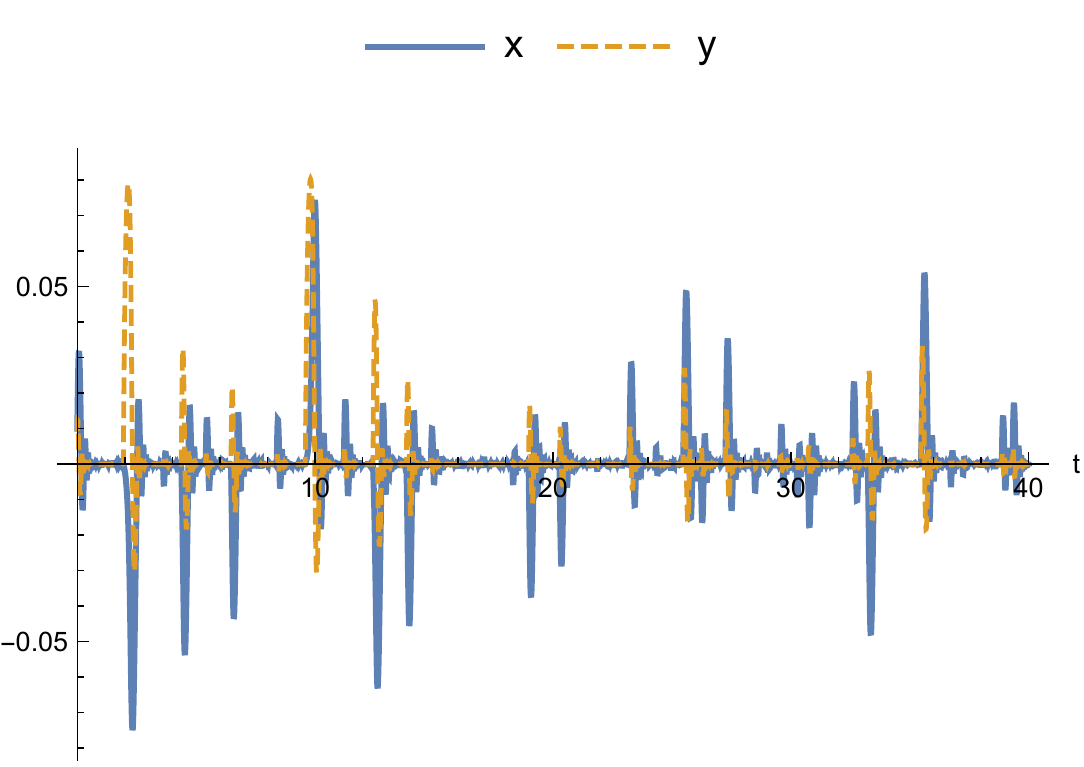}\qquad
\caption{Time series of $x(t)$ and $y(t)$ for system~\eqref{eq_chaos2d}, illustrating sustained aperiodic oscillations.}
\label{fig:timezain}
\end{figure}

These numerical findings are consistent with the well-established indicators of chaos in the sense of  Devaney \cite{Devaney1989}, which include sensitivity to initial conditions, topological transitivity, and dense periodic orbits, and also align with the Lyapunov-based criteria proposed by Yorke and collaborators~\cite{Kaplan1979}. Notably, these indicators remain stable under variations in the integration step size, initial conditions, and parameter values, suggesting that the chaotic behavior is not a numerical artifact but rather an intrinsic property of the system.

In summary, the model proposed by Zain-Aldeen et al. \cite{de2022chaotic}  offers a minimal, yet rigorous framework to explore how nonsmoothness alone can produce strange attractors in autonomous flows. Its simplicity makes it an attractive candidate for further theoretical analysis, while its rich dynamical behavior challenges long-held assumptions about the dimensionality requirements for chaos.
\begin{figure}[h!]
\centering
\includegraphics[scale=0.15]{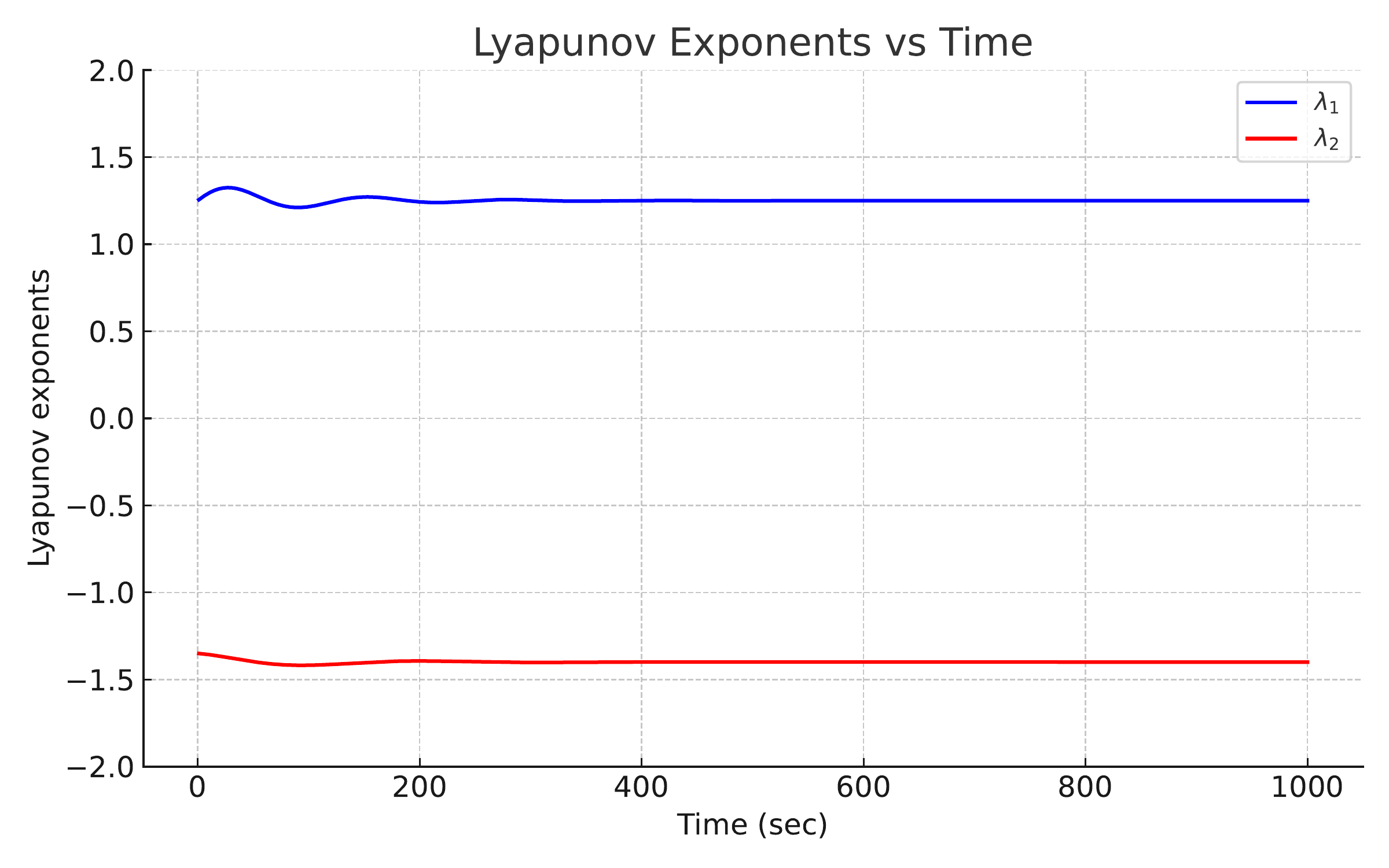}
\caption{Lyapunov exponent vs time. The blue curve ($\lambda_1 > 0$) stabilizes near $+1.25$, indicating chaotic behavior. The red curve ($\lambda_2 < 0$) converges toward $-1.4$, as expected for a dissipative two-dimensional system.}
\label{fig:lyapexp}
\end{figure}
\subsection{Bifurcation Analysis}\label{sec3.1}

Figure~\ref{fig:bif_diag} was recomputed over the parameter range of the
period-doubling cascade to display all bifurcations listed in
Table~\ref{tab:an_values}. In contrast to earlier diagrams
\cite{de2022chaotic}, the present computation explores a broader parameter
window and reveals the full route to chaos.

As shown in Figure~\ref{fig:bif_diag}, the system undergoes a classical
period-doubling cascade as the control parameter $a$ decreases: a first
period-doubling is followed by further doublings until chaotic dynamics
emerge near a critical threshold. This transition is characterized by the
accumulation of bifurcations within a narrowing parameter interval, a
hallmark of universal scaling.
The diagram was obtained by plotting, for each $a$, the local maxima of
$y(t)$ after transients had decayed. This peak-detection approach clearly
shows the period-doubling route to chaos, with each branching corresponding
to a doubling of the orbit’s period.
\begin{figure}[ht]
\centering
\includegraphics[width=0.7\textwidth]{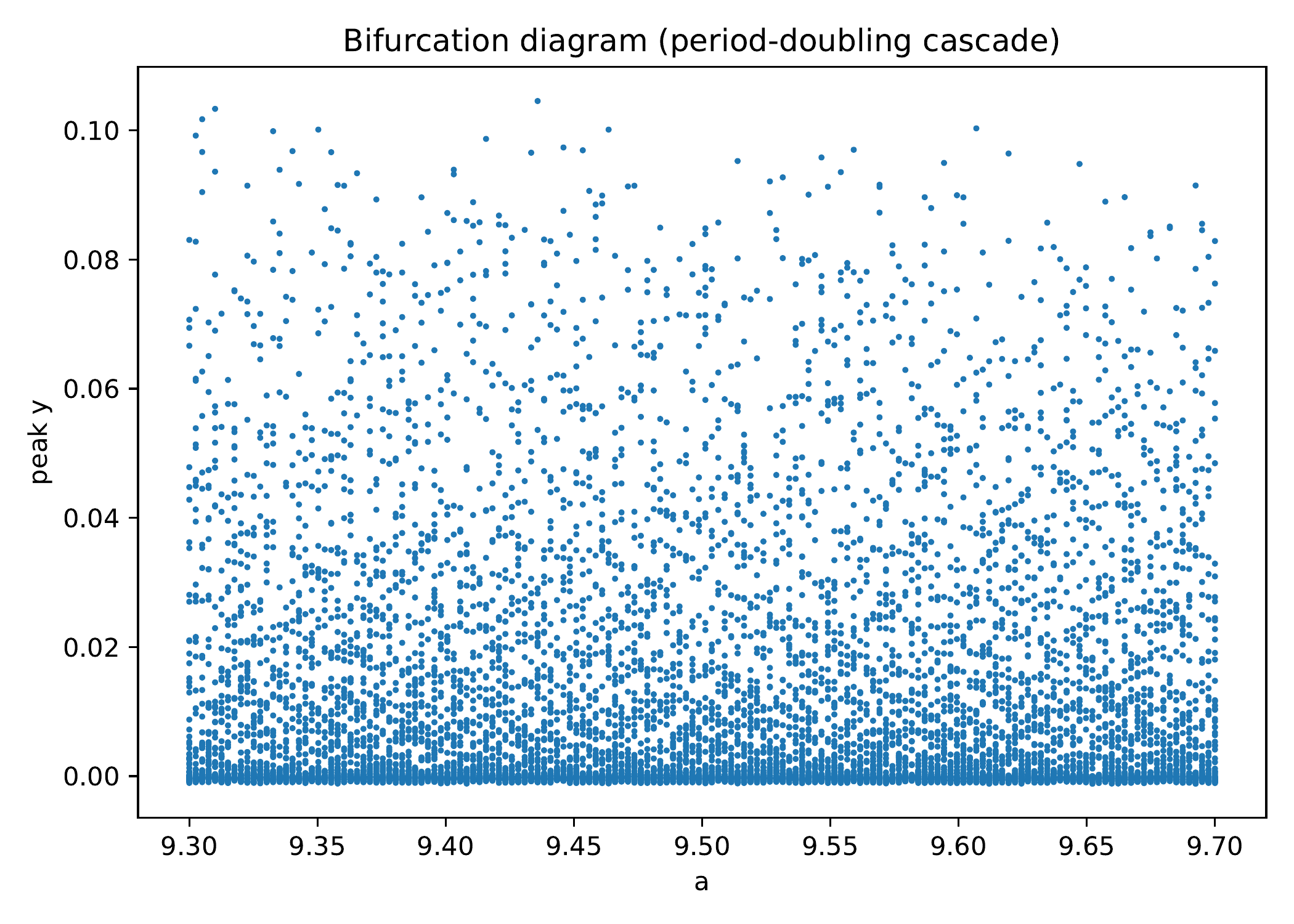}
\caption{Bifurcation diagram of system ~\eqref{eq:zain} obtained by scanning the parameter $a\in [9.3,9.7]$  (with $b=10$ fixed). Each point corresponds to a local maximum of $y(t)$ after discarding transients. The diagram reveals the full period-doubling route to chaos summarized in Table~\ref{tab:an_values}.}
\label{fig:bif_diag}
\end{figure}

Based on these qualitative changes, three primary dynamical regimes can be
identified, as summarized in Table~\ref{tab:regions}. For sufficiently large
values of $a$, the system exhibits regular periodic motion in the form of
stable limit cycles. In an intermediate range, the system undergoes successive
period-doubling bifurcations leading to complex oscillatory patterns. Finally,
for smaller values of $a$, the trajectories converge to a chaotic attractor
characterized by sensitive dependence on initial conditions.
\begin{table}[hbtp]
   \begin{center}
    \begin{tabular}{| |l| l| l ||}
        \hline
        Regime & Parameter Range & Qualitative Behavior \\
        \hline
        Periodic & $a$ large & Stable limit cycles, predictable orbits \\
        Transition & intermediate $a$ & Period-doubling cascade \\
        Chaotic & $a$ small & Strange attractor, sensitive dependence \\
        \hline
    \end{tabular}
    \label{tab:regions}
    \end{center}
     \caption{Dynamical regimes of the nonsmooth system.}
\end{table}

Table~\ref{tab:an_values} reports the parameter values $\{a_n\}$ corresponding to
successive period-doubling bifurcations detected in the diagram of Figure~\ref{fig:bif_diag}.
The Feigenbaum ratios
\[
\delta_n = \frac{a_{n-1}-a_{n-2}}{a_{n}-a_{n-1}}, \qquad n \ge 3,
\]
stabilize rapidly toward $\delta \approx 4.67$, in excellent agreement with the
universality constant for unimodal maps. This quantitative agreement confirms that
the route to chaos in the nonsmooth system proceeds through the classical
period-doubling scenario and provides a robust numerical signature supporting the
interpretation of the attractor as a genuine manifestation of deterministic chaos rather
than a numerical artifact.

\begin{table}[ht]
\centering
\label{tab:an_values}
\begin{tabular}{c c c}
\hline
$n$ & $a_n$ & $\delta_n$ \\
\hline
1 & 9.704000 & -- \\
2 & 9.606000 & -- \\
3 & 9.585600 & 4.80 \\
4 & 9.581200 & 4.70 \\
5 & 9.580300 & 4.65 \\
6 & 9.580100 & 4.67 \\
7 & 9.580050 & 4.669 \\ 
\hline
\end{tabular}
\caption{Parameter values $a_n$ for successive period-doubling bifurcations and
corresponding Feigenbaum ratios $\delta_n$. The ratios are defined only for $n \ge 3$,
as they require three consecutive bifurcation values $(a_{n-2},a_{n-1},a_n)$.}
\end{table}

\section{Conclusions}\label{sec4}
This work provides a rigorous and systematic reassessment of chaotic behavior in planar singular and nonsmooth systems. 
For the Cummings-Dixon-Kaus model, we demonstrated that blow-up regularization restores the smoothness assumptions required by the Poincaré-Bendixson theorem and proved that the regularized and original flows are topologically equivalent away from the singularity. 
This result rigorously rules out chaotic attractors in the desingularized setting and clarifies that the irregular transients previously reported correspond to recurrent but nonchaotic homoclinic dynamics.

In contrast, our numerical investigation of a nonsmooth system with a $|x|$ term reveals a complete period-doubling route to chaos. 
We provided an extended sequence of bifurcation values $\{a_n\}$ converging to Feigenbaum’s constant, supported by positive Lyapunov exponents, broadband spectra, and fractal dimension estimates. 
These computations constitute a reproducible benchmark for further studies of chaos in planar nonsmooth systems.

To illustrate the broader scope of our findings, we briefly applied the same criteria to other systems previously reported to exhibit chaotic behavior. 
For the CDK model and the variant analyzed by Buts and Kuzmin, the apparent attractors disappear after smooth regularization and high-precision integration, leading to their classification as spurious. 
In contrast, the nonsmooth system of Zain-Aldeen et al.\ retains a strange attractor with positive Lyapunov exponent and Feigenbaum scaling, supporting its classification as genuine. 
Other systems, such as dry-friction oscillators and switching converters, exhibit attractors that persist under Filippov regularization and have been confirmed experimentally, hence are also regarded as genuine. 
Finally, smooth predator-prey models satisfying the hypotheses of the Poincaré-Bendixson theorem can only display periodic orbits; chaotic reports vanish under step-size refinement and are therefore classified as spurious.
Together, these examples highlight the generality of our approach and provide further evidence that singularity-induced complexity should not be conflated with robust chaotic dynamics.

From this comparative analysis, we proposed a diagnostic protocol that integrates three key steps: 
(i) apply blow-up or regularization to test the persistence of attractors, 
(ii) perform numerical refinement to exclude discretization artifacts, and 
(iii) verify chaos through invariant-set indicators such as Lyapunov exponents or entropy. 
Together, these criteria provide a practical and mathematically grounded standard for distinguishing genuine low-dimensional chaos from spurious complexity induced by singularities or numerical effects.

Altogether, our results deliver a reproducible numerical benchmark and a concise,
practical protocol for assessing chaos in planar systems. By combining regularization,
numerical refinement, and invariant-set diagnostics, this framework offers a robust
standard for distinguishing genuine low-dimensional chaos from spurious complexity
induced by singularities or numerical artifacts, thereby providing a clear reference for
future theoretical and applied studies.

\section*{Acknowledgments} 
The  author is partially supported by  Proyecto CBI-UAMI 2026.

\bibliographystyle{amsplain}
\bibliography{ref_pupu}
\end{document}